\newtheorem{theorem}{Theorem}
\newtheorem{lemma}{Lemma}
\newtheorem{example}{Example}
\newtheorem{proposition}{Proposition}
\newtheorem{definition}{Definition}
\def \be {\begin{equation}}
\def \ee {\end{equation}}
\def\@seccntformat#1{\@ifundefined{#1@cntformat}%
   {\csname the#1\endcsname\quad}
   {\csname #1@cntformat\endcsname}}
\newcommand\section@cntformat{}     
\begin{document}



\title{A note on simple games with two equivalence classes of players}
\author{Sascha Kurz$^1$ and Dani Samaniego$^2$\\ \footnotesize $^1$University of Bayreuth, \footnotesize sascha.kurz@uni-bayreuth.de
\vspace{-0.3cm}
\\  \footnotesize $^2$Universitat Politècnica de Catalunya,  \footnotesize daniel.samaniego.vidal@upc.edu}
\date{}
\maketitle
\begin{abstract}
  Many real-world voting systems consist of voters that occur in just two different types. Indeed, each voting system with a {\lq\lq}House{\rq\rq} and a 
  {\lq\lq}Senat{\rq\rq} is of that type. Here we present structural characterizations and explicit enumeration formulas for these so-called bipartite 
  simple games.
  
  \medskip
  
  \noindent
  \textbf{Keywords:}  Boolean functions, Dedekind numbers, voting theory, simple games.
\end{abstract}

\subsection{Introduction}
\label{sec_intro}
Consider voting systems where each player or voter can either agree or disagree to a given proposal. The resulting group decision then is to either accept or to 
dismiss the proposal. The common mathematical model is that of a simple game, which is a monotone Boolean function, see Definition~\ref{ex_simple_game}. Simple games 
where all players are homogeneous have a rather simple structure and are studied in \cite{may1952set}. Also the cases where the players come in just two different types, 
called bipartite simple games, are quite common in real-world voting systems, see e.g.~\cite{felsenthal1998bicameral,taylor1999simple}.  Indeed, each voting system with a 
{\lq\lq}House{\rq\rq} and a {\lq\lq}Senat{\rq\rq} is of that type. 
In general those games are not weighted as they are in the homogeneous case. However, rather weak additional assumptions are sufficient to imply weightedness \cite{herranz2011any}, 
see also \cite{freixas2017characterization}. Weighted games with two types of voters admit a unique minimum integer representation \cite{freixas2014minimum}. For the characterization 
and enumeration of so-called complete simple games with two types of voters we refer to e.g.\ \cite{kurz2013dedekind} and the references mentioned therein, see also 
\cite{freixas2013golden,freixas2014enumeration} for variations. The first subclasses of bipartite simple games were recently enumerated in \cite{freixas2021enumeration}. 
Here we complete the analysis by resolving the open cases and conjectures, see Section~\ref{sec_enumeration}. Especially, we give an explicit formula for the number of 
non-isomorphic simple games with $n$ players and two equivalence classes of players. In \cite{freixas2021enumeration} simple games with two equivalence classes of players 
were parameterized using a matrix notation based on the corresponding minimal winning vectors. In Section~\ref{sec_parameterization} we extend this result to general simple games, 
i.e., simple games with $t\ge 1$ equivalence classes of players. First we have to introduce some notation in Section~\ref{sec_preliminaries}.

\subsection{Preliminaries}
\label{sec_preliminaries}

Let $N=\left\{ 1,2,...,n\right\}$ be a finite set of voters or players. Any subset $S$ of $N$ is called a coalition and the set of all coalitions of $N$ is denoted by
the power set $2^{N}=\{S\mid S\subseteq N\}$. 

\begin{definition}
  \label{def_simple_game}
  A \emph{simple game} is a mapping $v\colon 2^N\to\{0,1\}$ that satisfies $v(\emptyset)=0$, $v(N)=1$, and $v(S)\le v(T)$ for all $\emptyset\subseteq S\subseteq T\subseteq N$, where 
  the finite set $N$ is called the \emph{player set} or \emph{set of players}.
\end{definition}
Let $v$ be a simple game with player set $N$. A subset $S\subseteq N$ is called \emph{winning coalition} if $v(S)=1$ and \emph{losing coalition} otherwise. A winning coalition 
$S\subseteq N$ is called \emph{minimal winning coalition} if all proper subsets $T\subsetneq S$ of $S$ are losing. Similarly, a losing coalition $S$ is called maximal 
losing coalition if all proper supersets $T\supsetneq S$ of $S$ are winning. For an extensive introduction to simple games we refer to \cite{taylor1999simple}.

\begin{example}
  \label{ex_simple_game}
  For player set $N=\{1,2,3\}$ let $v$ be the simple game defined by $v(S)=1$ iff $w(S):=\sum_{i\in S}w_i\ge 3$ and $v(S)=0$ otherwise for all 
  $S\subseteq N$, where $w_1=3$, $w_2=2$, and $w_3=1$.
\end{example}

The winning coalitions of the simple game from Example~\ref{ex_simple_game} are given by $\{1\}$, $\{2,3\}$, $\{1,2\}$, $\{1,3\}$, and $\{1,2,3\}$. Only $\{1\}$ and $\{2,3\}$ are 
minimal winning coalitions.

\begin{definition}
  Let $v$ be a simple game with player set $N$. Two players $i,j\in N$ are called \emph{equivalent} if $v(S\cup\{i\})=v(S\cup\{j\})$ for all $\emptyset\subseteq S\subseteq N\backslash\{i,j\}$.
\end{definition}
In the simple game $v$ from Example~\ref{ex_simple_game} the players $2$ and $3$ are equivalent while player~$1$ is neither equivalent to player~$2$ nor to player~$3$. In general being equivalent  
is an equivalence relation, i.e., $N$ is partitioned into $t\ge 1$ equivalence classes $N_1,\dots,N_t$ such that all pairs of players in such an equivalence class $N_i$ are equivalent while 
two players from two different equivalence classes are not equivalent. In our example we have the equivalence classes $N_1=\{1\}$ and $N_2=\{2,3\}$. Note that the numbering of the 
equivalence classes is arbitrary while their number $t$ is not. We remark that simple games with just one equivalence class, i.e.\ $t=1$, have a pretty simple structure: For a given number 
$n$ of players there exists an integer $1\le q\le n$ such that each coalition is winning iff it has cardinality at least $q$. So, there are exactly $n$ simple games with $n$ players and 
$t=1$. One aim of this paper is to deduce an exact formula for the number of simple games with $n$ players and $t=2$ equivalence classes of players.

We remark that for a given set of players, each simple game $v$ is uniquely characterized by either the set of winning coalitions, the set of losing coalitions, the set of 
minimal winning coalitions, or the set of maximal losing coalitions. While the number of minimal winning coalitions is at most at large as the number of winning coalitions, it 
can be as large as ${n\choose \left\lfloor n/2\right\rfloor}$; attained by the simple game with $n$ players and $t=1$ that is uniquely described by $q=\left\lfloor n/2\right\rfloor$. 
So, our aim is to find a more compact representation for the set of minimal winning coalitions.

\begin{definition}
  Let $v$ be a simple game with player set $N:=\{1,\dots,n\}$ and $N_1,\dots,N_t$ be a partition of $N$ into equivalence classes of players. We set 
  $\overline{n}:=\left(|N_1|,\dots,|N_t|\right)\in\mathbb{N}^t$ and for each coalition $S\subseteq N$ we define the vector $m^S:=\left(|S\cap N_1|,\dots,|S\cap N_t|\right)\in\mathbb{N}^t$. 
\end{definition}
While for each coalition $S\subseteq N$ there is a unique vector $m^S$, there can be several coalitions $S'$ with $m^S=m^{S'}$. However, in this case we have that $S$ is a winning coalition 
iff $S'$ is a winning coalition. Similar statements hold for losing, minimal winning, and maximal losing coalitions. So, we may speak of winning vectors etcetera. To this end we define 
a partial ordering on $\mathbb{N}^t$:
\begin{definition}
  Let $x=(x_1,\dots,x_t)\in\mathbb{N}^t$ and $y=(y_1,\dots,y_t)\in\mathbb{N}^t$. We write $x\preceq y$ or $y\succeq x$ iff $x_i\le y_i$ for all $1\le i\le t$. We abbreviate the 
  cases when $x\preceq y$ and $x\neq y$ by $x\prec y$. Similarly we write $y\succ x$ iff $y\succeq x$ and $x\neq y$. The case when neither $x\preceq y$ nor $x\succeq y$ holds is 
  denoted by $x\bowtie y$ and we say that the two vectors are \emph{incomparable}.    
\end{definition}
By $\mathbf{0}$ we denote the all-zero vector whenever the number of entries, i.e., zeroes, is clear from the context.
\begin{definition}
  Let $v$ be a simple game with player set $N:=\{1,\dots,n\}$ and $N_1,\dots,N_t$ be a partition of $N$ into equivalence classes of players. 
  Let $m\in\mathbb{N}^t$ be a vector with $\mathbf{0}\preceq m\preceq \overline{n}$ and $S\subseteq N$ be an arbitrary coalition with $m=m^S$. We say that 
  \begin{itemize}
    \item $m$ is a \emph{winning vector} iff $S$ is a winning coalition;  
    \item $m$ is a \emph{losing vector} iff $S$ is a losing coalition;
    \item $m$ is a \emph{minimal winning vector} iff $S$ is a minimal winning coalition; and
    \item $m$ is a \emph{maximal losing vector} iff $S$ is a maximal losing coalition. 
  \end{itemize}
\end{definition}
In our example (with fixed equivalence classes $N_1=\{1\}$ and $N_2=\{2,3\}$) the minimal winning vectors are given by $(1,0)$ and $(0,2)$, while $(0,1)$ is the unique maximal 
losing vector.

\subsection{A parameterization of simple games with $\mathbf{t}$ equivalence classes}
\label{sec_parameterization}
Our next aim is to uniquely describe each simple game $v$ by the \emph{counting vector} $\overline{n}$ and a list of minimal winning vectors $m^1,\dots,m^r$. First we 
observe $m^i\bowtie m^j$ for all $1\le i,j\le r$ with $i\neq j$, i.e., different minimal winning vectors are incomparable. Indeed, each list of pairwise incomparable 
vectors $m^1,\dots,m^r$ with $\mathbf{0}\preceq m^i\preceq\overline{n}$ for all $1\le i\le r$ defines a simple game $v$. However, the number of equivalence classes of the resulting simple 
game may be strictly smaller than $t$, i.e., the size of the vectors. If we e.g.\ define a simple game by $\overline{n}=(1,1,1)$ and the minimal winning vectors 
$(1,0,0)$ and $(0,1,1)$, then we end up with the simple game from Example~\ref{ex_simple_game}, which has exactly two equivalence classes of players. 

In order to deduce the extra conditions that guarantee that the number of equivalence classes of the resulting simple game indeed equals $t$, we consider 
a simple game $v$ with player set $N$ that is partitioned into subsets $N_1,\dots,N_t$ such that for all $i,i'\in N_j$, where $1\le j\le t$, the players 
$i$ and $i'$ are equivalent. Under which conditions can we join $N_{\tilde{i}}$ and $N_{\tilde{j}}$ for $\tilde{i}\neq \tilde{j}$? Given arbitrary players 
$i'\in N_{\tilde{i}}$ and $j'\in N_{\tilde{j}}$, we can join $N_{\tilde{i}}$ and $N_{\tilde{j}}$ iff player $i'$ is equivalent to player $j'$. Otherwise 
there exists a coalition $S\subseteq N\backslash\{i',j'\}$ such that 
$v(S\cup\{i'\})\neq v(S\cup\{j'\})$. W.l.o.g.\ we assume that $S\cup\{j'\}$ is a winning coalition. Since $S\cup\{i'\}$ then is a losing coalition, player $j'$ 
cannot be a null player, so that there also exists a coalition $S'\subseteq S$ such that $S'\cup\{j'\}$ is a minimal winning coalition while 
$S'\cup\{i'\}\subseteq S\cup\{i'\}$ is a losing coalition. Thus, there exist a minimal winning coalition $\{j'\}\subseteq T\subseteq N\backslash\{i'\}$ such that  
$T\backslash\{j'\}\cup\{i'\}$ is a losing coalition, i.e., it is not contained in any minimal winning coalition, where we eventually have to interchange the roles 
of $j'$ and $i'$. Translating to vector notation directly gives:
\begin{lemma} 
  Let $\overline{n}\in \mathbb{N}_{>0}^t$, $n=\sum\limits_{i=1}^t \overline{n}_i$, $N=\{1,\dots,n\}$, $N_i=\left\{\sum\limits_{j=1}^{i-1} |N_j|+1,\dots,\sum\limits_{j=1}^{i} |N_j|\right\}$ for all 
  $1\le i\le t$ and $m^1,\dots,m^r$ be pairwise incomparable, where $m^i\in\mathbb{N}^t$ and $\mathbf{0}\preceq m^i\preceq \overline{n}$ for all $1\le i\le t$. For each $1\le i\le t$ we 
  denote by $e^i\in\mathbb{N}^t$ the vector that has a one at position $i$ and zeroes at all other coordinates. If for each $1\le i,j\le t$ 
  with $i\neq j$ there exists an index $1\le h\le r$ such that for either $m':=m^h+e_i-e_j$ or $m':=m^h-e_i+e_j$ we have $\mathbf{0}\preceq m'\preceq\overline{n}$ 
  and the vector $m'$ is losing, i.e.\ there exists no index $1\le h'\le r$ with $m^{h'}\succeq m'$, then the simple game $v$ with player set $N$ defined by $v(S)=1$ 
  iff there exists an index $1\le h\le r$ with $m^S\succeq m^h$ for all $S\subseteq N$ 
  \begin{itemize}
    \item has $N_1,\dots,N_t$ as its equivalence classes of players and
    \item the minimal winning vectors of $v$ are given by $m^1,\dots,m^r$.
  \end{itemize}
\end{lemma}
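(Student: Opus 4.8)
\emph{Proof plan.} The plan is to handle the two bullet points separately, noting that the statement about minimal winning vectors uses only the incomparability of the $m^h$, while the statement about the equivalence classes is exactly where the ``joining'' condition on the $m^h$ is needed. First I would record the routine facts that, by construction, $v(S)$ depends only on $m^S$, and that $v$ is monotone ($S\subseteq T$ implies $m^S\preceq m^T$, hence $m^S\succeq m^h$ implies $m^T\succeq m^h$). Since $m^N=\overline{n}\succeq m^h$ for every $h$ we get $v(N)=1$; and if some $m^h$ equaled $\mathbf{0}$, then incomparability would force $r=1$ and $m^1=\mathbf{0}$, but then for $i\neq j$ neither $\mathbf{0}+e^i-e^j$ nor $\mathbf{0}-e^i+e^j$ satisfies $\mathbf{0}\preceq m'\preceq\overline{n}$, contradicting the hypothesis; hence $v(\emptyset)=0$ and $v$ is a simple game.

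For the second bullet I would fix $h$ and a coalition $S$ with $m^S=m^h$ (it exists since $\mathbf{0}\preceq m^h\preceq\overline{n}$). Then $v(S)=1$, and if some $T\subsetneq S$ were winning we would have $m^T\prec m^h$ (strict because $|T|<|S|$) and $m^T\succeq m^{h'}$ for some $h'$, so $m^{h'}\prec m^h$, contradicting incomparability; thus each $m^h$ is a minimal winning vector. Conversely, if $m$ is a minimal winning vector, realized by a minimal winning coalition $S$, then $m=m^S\succeq m^h$ for some $h$, and $m^S\neq m^h$ would let me delete players to obtain a winning $T\subsetneq S$ with $m^T=m^h$, contradicting minimality; hence $m=m^h$, and the minimal winning vectors of $v$ are precisely $m^1,\dots,m^r$.

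For the first bullet I would reuse the argument sketched just before the lemma, now in the converse direction. Players $i,i'$ in the same part $N_j$ are equivalent because $m^{S\cup\{i\}}=m^{S\cup\{i'\}}=m^S+e^j$ for every $S\subseteq N\backslash\{i,i'\}$, so each equivalence class of $v$ is a union of the parts $N_1,\dots,N_t$. To rule out any merging, fix $\tilde{i}\neq\tilde{j}$ and players $i'\in N_{\tilde{i}}$, $j'\in N_{\tilde{j}}$, and take the index $h$ and the losing vector $m'$ provided by the hypothesis; after possibly swapping $\tilde{i}$ and $\tilde{j}$ we may assume $m'=m^h+e^{\tilde{i}}-e^{\tilde{j}}$. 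The constraints $\mathbf{0}\preceq m'\preceq\overline{n}$ translate into $m^h_{\tilde{j}}\ge 1$ and $m^h_{\tilde{i}}<\overline{n}_{\tilde{i}}$, which is precisely the room needed to pick a coalition $S\subseteq N\backslash\{i',j'\}$ with $m^S=m^h-e^{\tilde{j}}$. Then $S\cup\{j'\}$ has vector $m^h$ and is winning, whereas $S\cup\{i'\}$ has vector $m'$ and is losing, so $v(S\cup\{i'\})\neq v(S\cup\{j'\})$ and $i',j'$ are not equivalent; hence the equivalence classes of $v$ are exactly $N_1,\dots,N_t$.

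I expect the only genuinely delicate step to be this last construction of the coalition $S$: one must check that a coalition of the prescribed type $m^h-e^{\tilde{j}}$ avoiding both chosen players really exists, which is exactly where the size bounds $\mathbf{0}\preceq m'\preceq\overline{n}$ from the hypothesis are consumed, and one must also carry along the symmetric second case $m'=m^h-e^{\tilde{i}}+e^{\tilde{j}}$. The remaining arguments are routine manipulations of the partial order $\preceq$ and of the incomparability of the vectors $m^h$.
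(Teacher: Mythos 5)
Your proof is correct and follows essentially the same route as the paper, which only sketches the argument in the discussion preceding the lemma and then asserts that "translating to vector notation directly gives" the claim. You supply the details the paper leaves implicit — in particular the explicit construction of a coalition $S$ with $m^S=m^h-e^{\tilde{j}}$ avoiding both $i'$ and $j'$ (using the bounds $\mathbf{0}\preceq m'\preceq\overline{n}$ exactly as needed) and the verification that the $m^h$ are precisely the minimal winning vectors — and all of these steps are sound.
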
  
\begin{example}
  Let $\overline{n}=(4,2)$, $m^1=(3,0)$, and $m^2=(2,1)$, so that $n=6$, $N=\{1,\dots,6\}$, $N_1=\{1,2,3,4\}$, and $N_2=\{5,6\}$. The minimal winning coalitions of the 
  corresponding simple game $v$ are given by $\{1,2,3\}$, $\{1,2,4\}$, $\{1,3,4\}$, $\{2,3,4\}$, $\{1,2,5\}$, $\{1,3,5\}$, $\{1,4,5\}$, $\{2,3,5\}$, $\{2,4,5\}$, $\{3,4,5\}$, 
  $\{1,2,6\}$, $\{1,3,6\}$, $\{1,4,6\}$, $\{2,3,6\}$, $\{2,4,6\}$, and $\{3,4,6\}$, so that $v$ indeed consists of the $t=2$ equivalence classes of players $N_1$ and $N_2$.\footnote{Note 
  that the simple game $v$ is complete, cf.~\cite{isbell1958class}, so that it does not occur in the list of \cite[Example 3.4.d]{freixas2021enumeration}.} 
\end{example}  
Of course, every relabeling of the vectors $m^1,\dots,m^r$ yields the same simple game, so that we will assume that they are lexicographically ordered.
\begin{definition}
  Let $x=(x_1,\dots,x_t)\in\mathbb{N}^t$ and $y=(y_1,\dots,y_t)\in\mathbb{N}^t$. We write $x\le y$ or $y\ge x$ iff there exist an index $0\le j\le t$ such that 
  $x_i=y_i$ for all $1\le i\le j$ and $x_{j+1}<y_{j+1}$ (if $j<n$). In words we say that $x$ is \emph{lexicographically at most as large} as $y$. We abbreviate the cases when 
  $x\le y$ and $x\neq y$ by $x<y$. Similarly we write $y>x$ if $y\ge x$ and $y\neq x$. Here the relation is called \emph{lexicographically smaller} or 
  \emph{lexicographically larger}, respectively.  
\end{definition}

So, we describe each simple game with $t\ge 1$ equivalence classes of players by a counting vector $\overline{n}\in\mathbb{N}_{>0}^t$ and a matrix $\mathcal{M}\in\mathbb{N}^{r\times t}$ 
consisting of $r$ row vectors $m^i$ satisfying
\begin{enumerate}
  \item[(I)] $\mathbf{0}\preceq m^i\preceq \overline{n}$ for all $1\le i\le r$;
  \item[(II)] $m^i\bowtie m^j$ for all $1\le i<j\le r$; 
  \item[(III)] $m^1>\dots>m^r$; and
  \item[(IV)] for each $1\le i,j\le t$ with $i\neq j$ there exists an index $1\le h\le r$ such that for $m':=m^h+e_i-e_j$ or $m':=m^h-e_i+e_j$ we have 
             $\mathbf{0}\preceq m'\preceq \overline{n}$ and $m^{h'}\not\succeq m'$ for all $1\le h'\le r$.
\end{enumerate} 

Note that while the conditions (I)-(IV) already factor out several symmetries for simple games, interchanging entire equivalence classes of players is not yet considered. E.g.\ 
for $\overline{n}=(4,2)$, $\mathcal{M}=\begin{pmatrix}3&0\\2&1\end{pmatrix}$, $\overline{n}'=(2,4)$, and $\mathcal{M}'=\begin{pmatrix}1&2\\0&3\end{pmatrix}$ the pairs 
$(\overline{n},\mathcal{M})$ and $(\overline{n}',\mathcal{M}')$ satisfy all requirements while representating isomorphic simple games. 

In general we obtain for each representation $(\overline{n},\mathcal{M})$ of a simple game $v$ with $t$ equivalence classes of players and each permutation 
$\pi$ of $\{1,\dots,t\}$ another representation $\left(\overline{n}^\pi,\mathcal{M}^\pi\right)$ of $v$, where 
$$
  \overline{n}^\pi= \left(\overline{n}_{\pi(1)},\dots,\overline{n}_{\pi(t)}\right)
$$  
and $\mathcal{M}^\pi$ consists of the row vectors $\widehat{m}^1,\dots,\widehat{m}^r$ sorted in decreasing lexicographical order, where 
$$
  \widehat{m}^i=\left(m^i_{\pi(1)},\dots,m^i_{\pi(t)}\right)
$$
for all $1\le i\le r$. E.g.\, in our above example we have $\overline{n}'=\overline{n}^\pi$ and $\mathcal{M}'=\mathcal{M}^\pi$ for the permutation $\pi$ 
interchanging $1$ and $2$, which is the only permutation for $t=2$ which is not the identity. Note that the list of representations $\left(\overline{n}^\pi,\mathcal{M}^\pi\right)$, 
that satisfy conditions (I)-(IV), is exhaustive if we start from an arbitrary representation $(\overline{n},\mathcal{M})$ and consider all $t!$ permutations $\pi$ of $\{1,\dots,t\}$.

So, in order to obtain a unique representative for an isomorphism class of simple games with respect to relabeling the players, we have to distinguish one of these 
$\left(\overline{n}^\pi,\mathcal{M}^\pi\right)$. Again we can utilize some kind of lexicographical ordering.
\begin{definition}
  Let $X=\left(x_{i,j}\right)\in\mathbb{N}^{r\times t}$ and $Y=\left(y_{i,j}\right)\in\mathbb{N}^{r\times t}$. We write $X\le Y$ or $Y\ge X$ iff $\widehat{x}\le \widehat{y}$ (or 
  $\widehat{y}\ge \widehat{x}$), where
  $$
    \widehat{x}=\left(x_{1,1},\dots,x_{r,1},x_{1,2},\dots,x_{r,2},\dots,x_{1,t},\dots,x_{r,t}\right)\in\mathbb{N}^{rt}
  $$ 
  and
  $$
    \widehat{y}=\left(y_{1,1},\dots,y_{r,1},y_{1,2},\dots,y_{r,2},\dots,y_{1,t},\dots,y_{r,t}\right)\in\mathbb{N}^{rt}.
  $$ 
  In words we say that $X$ is \emph{lexicographically at most as large} as $Y$. We abbreviate the cases when $X\le Y$ and $X\neq Y$ by $X<Y$. Similarly we write $Y>X$ if 
  $Y\ge X$ and $Y\neq X$. Here the relation is called \emph{lexicographically smaller} or \emph{lexicographically larger}, respectively.  
\end{definition}
So, we might choose the representation $\left(\overline{n}^\pi,\mathcal{M}^\pi\right)$ as {\lq\lq}the{\rq\rq} representative where $\mathcal{M}^\pi$ is lexicographically 
largest. Having the algorithmic complexity in mind, we instead assume that the entries of $\overline{n}$ are weakly decreasing and only consider those permutations $\pi$ 
of $\{1,\dots,t\}$ that fix $\overline{n}$ for the determination of the lexicographical maximum:
\begin{theorem}
  The isomorphism classes of simple games with $n\ge 1$ players and $t\ge 1$ equivalence classes of players are in one-to-one correspondence to pairs 
  $\left(\overline{n},\mathcal{M}\right)$, where $\overline{n}\in\mathbb{N}_{>0}^t$ and $\mathcal{M}\in\mathbb{N}^{r\times t}$ with row vectors $m^1,\dots,m^r$, for 
  some integer $r\ge 1$, satisfying
  \begin{enumerate}
    \item[(a)] $\overline{n}_1\ge\overline{n}_2\ge\dots\ge\overline{n}_t>0$, $\sum_{i=1}^t \overline{n}_i=n$;
    \item[(b)] $\mathbf{0}\preceq m^i\preceq \overline{n}$ for all $1\le i\le r$;
    \item[(c.1)] $m^i\bowtie m^j$ for all $1\le i<j\le r$; 
    \item[(c.2)] $m^1>\dots>m^r$; 
    \item[(d)] for each $1\le i,j\le t$ with $i\neq j$ there exists an index $1\le h\le r$ such that for $m':=m^h+e_i-e_j$ or $m':=m^h-e_i+e_j$ we have 
               $\mathbf{0}\preceq m'\preceq \overline{n}$ and $m^{h'}\not\succeq m'$ for all $1\le h'\le r$; and
    \item[(e)] $\mathcal{M}\ge \mathcal{M}^\pi$ for every permutation $\pi$ of $\{1,\dots,t\}$ with $\overline{n}=\overline{n}^\pi$.           
  \end{enumerate}
\end{theorem}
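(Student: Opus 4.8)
The plan is to construct a pair of mutually inverse maps between the set of isomorphism classes of simple games with $n$ players and $t$ equivalence classes of players and the set of pairs $(\overline{n},\mathcal{M})$ satisfying (a)--(e), and to verify that both maps are well defined.

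\emph{From games to pairs.} Given a simple game $v$ with $n$ players and exactly $t$ equivalence classes, I would first relabel the players so that these classes become the consecutive blocks $N_1,\dots,N_t$ with $|N_1|\ge\cdots\ge|N_t|$, set $\overline{n}:=(|N_1|,\dots,|N_t|)$, and let $\mathcal{M}$ have as its rows the minimal winning vectors of $v$, listed in strictly decreasing lexicographical order. Since coalitions with the same vector are simultaneously (minimal) winning or losing, $\mathcal{M}$ is well defined, and $r\ge 1$ because $v(N)=1$ forces a minimal winning coalition to exist. Properties (a), (c.1), (c.2) hold by construction and (b) is immediate; as $v$ has exactly $t$ equivalence classes, players from distinct blocks are never equivalent, so the paragraph preceding the Lemma shows that for every $i\ne j$ there is a minimal winning vector $m^h$ for which $m':=m^h+e_i-e_j$ or $m':=m^h-e_i+e_j$ satisfies $\mathbf{0}\preceq m'\preceq\overline{n}$ and is losing, which is exactly (d). Finally I replace $(\overline{n},\mathcal{M})$ by $(\overline{n}^{\pi},\mathcal{M}^{\pi})$ for a permutation $\pi$ with $\overline{n}^{\pi}=\overline{n}$ that makes $\mathcal{M}^{\pi}$ lexicographically largest; properties (a)--(d) survive this step because they are invariant under simultaneously permuting the entries of $\overline{n}$ and the columns of $\mathcal{M}$, and now (e) holds as well. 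This map does not depend on the chosen representative $v$ nor on the chosen ordering of equal-sized blocks: any two admissible choices differ by a player relabeling, hence by a block permutation $\rho$ with $\overline{n}^{\rho}=\overline{n}$, and the maximization defining (e) ranges over a set of permutations that is invariant under composition with $\rho$, so it returns the same matrix in either case.

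\emph{From pairs to games.} A pair $(\overline{n},\mathcal{M})$ satisfying (a)--(e) in particular satisfies (b), (c.1), (c.2), (d), which are precisely the hypotheses (I)--(IV) of the Lemma, the ordering (c.2) merely fixing the otherwise irrelevant order of the rows. Hence the Lemma produces a simple game $v$ on $\{1,\dots,n\}$ whose equivalence classes are the blocks $N_1,\dots,N_t$ of sizes $\overline{n}_1,\dots,\overline{n}_t$ and whose minimal winning vectors are exactly $m^1,\dots,m^r$; I assign to the pair the isomorphism class of this $v$, which is plainly well defined.

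\emph{The two maps are mutually inverse.} Starting from an admissible pair, the game constructed from it already has its equivalence classes listed in weakly decreasing size order (by (a)) and its minimal winning vectors in decreasing lexicographical order (by (c.2)), so extracting a pair from it returns $(\overline{n},\mathcal{M})$; and since $\mathcal{M}$ is by (e) already the required lexicographic maximum, applying the first map reproduces $(\overline{n},\mathcal{M})$. In the other direction, two simple games with the same counting vector and the same set of minimal winning vectors are isomorphic through any block-preserving bijection of their player sets, so the game rebuilt from the pair extracted from $v$ lies in the isomorphism class of $v$. The step I expect to be the most delicate is injectivity: if two admissible pairs $(\overline{n},\mathcal{M})$ and $(\overline{n}',\mathcal{M}')$ represent isomorphic games, then by the exhaustiveness remark preceding the statement there is a permutation $\pi$ with $\overline{n}'=\overline{n}^{\pi}$ and $\mathcal{M}'=\mathcal{M}^{\pi}$; property (a) for both pairs forces $\overline{n}=\overline{n}'$ (hence also $\overline{n}^{\pi^{-1}}=\overline{n}$), and then (e) applied to the first pair with $\pi$ and to the second with $\pi^{-1}$ gives $\mathcal{M}\ge\mathcal{M}^{\pi}=\mathcal{M}'$ and $\mathcal{M}'\ge(\mathcal{M}')^{\pi^{-1}}=\mathcal{M}$, so $\mathcal{M}=\mathcal{M}'$. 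This establishes the claimed bijection.
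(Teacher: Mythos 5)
Your proposal is correct and follows essentially the same route as the paper: the unnumbered Lemma of Section~3 supplies the direction from pairs to games, the paragraph preceding that Lemma supplies condition (d) from non-equivalence of players in distinct classes, and the exhaustiveness of the list $\left(\overline{n}^\pi,\mathcal{M}^\pi\right)$ together with the lexicographic maximization in (e) gives uniqueness of the representative. The paper leaves these steps as the surrounding discussion rather than a displayed proof; your write-up merely makes the well-definedness and injectivity checks explicit.
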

For the special case $t=2$ the conditions (a)-(e) can be simplified or made more explicit at the very least:
\begin{enumerate}
    \item[(a')] $\overline{n}_1\ge\overline{n}_2>0$, $\overline{n}_1+\overline{n}_2=n$;
    \item[(b')] $\mathbf{0}\preceq m^i\preceq \overline{n}$ for all $1\le i\le r$ (or $0\le m^i_j\le\overline{n}_j$ for all $1\le i\le r$, $1\le j\le t$);
    \item[(c')] $m^i_1>m^{i+1}_1$ and $m^i_2<m^{i+1}_2$ for all $1\le i\le r-1$; 
    \item[(d')] there exists an index $1\le h\le r$ such that for $m':=m^h+(-1,1)$ or $m':=m^h+(1,-1)$ we have $\mathbf{0}\preceq m'\preceq \overline{n}$ and 
               $m^{h'}\not\succeq m'$ for all $1\le h'\le r$; and
    \item[(e')] $\left(m^1_1,\dots,m^r_1\right)\ge\left(m^r_2,\dots,m^1_2\right)$ if $\overline{n}_1=\overline{n}_2$.           
\end{enumerate}  
Only the conversions from (c.1), (c.2) to (c') and from (e) to (e') need a little discussion. If $m^i_1=m^j_1$ for some $1\le i,j\le r$ with $i\neq j$, then we cannot have 
$m^i\bowtie m^j$, which is requested in (c.1). Thus, (c.2) implies $m^i_1>m^{i+1}_1$ for all $1\le i\le r-1$, which is the first part of (c'). The second part of (c') is then 
implied by using $m^i\bowtie m^{i+1}$. It can be easily checked that (c') implies (c.1) and (c.2). For condition (e) we remark that the unique permutation $\pi$ that is not 
the identity interchanges $1$ and $2$, so that $\overline{n}=\overline{n}^\pi$ is only possibly if $\overline{n}_1=\overline{n}_2$. Moreover we have
$$
  \mathcal{M}^\pi=\begin{pmatrix}m^r_2 & m^r_1\\ \vdots & \vdots\\ m^1_2 & m^1_1\end{pmatrix}, 
$$  
so that $\mathcal{M}\ge \mathcal{M}^\pi$ is equivalent to $\left(m^1_1,\dots,m^r_1\right)\ge\left(m^r_2,\dots,m^1_2\right)$.


Similarly, the conditions (I)-(IV) can be rephrased to
\begin{enumerate}
    \item[(I')] $\overline{n}_1\ge\overline{n}_2>0$, $\overline{n}_1+\overline{n}_2=n$;
    \item[(II')] $\mathbf{0}\preceq m^i\preceq \overline{n}$ for all $1\le i\le r$ (or $0\le m^i_j\le\overline{n}_j$ for all $1\le i\le r$, $1\le j\le t$);
    \item[(III')] $m^i_1>m^{i+1}_1$ and $m^i_2<m^{i+1}_2$ for all $1\le i\le r-1$; and
    \item[(IV')] there exists an index $1\le h\le r$ such that for $m':=m^h+(-1,1)$ or $m':=m^h+(1,-1)$ we have $\mathbf{0}\preceq m'\preceq \overline{n}$ and 
               $m^{h'}\not\succeq m'$ for all $1\le h'\le r$.           
\end{enumerate}  
for the special case $t=2$.

\subsection{Enumeration results}
\label{sec_enumeration}
In \cite[Theorem 4]{kurz2013dedekind} the number of complete simple games with $n$ players and two equivalence classes of players was determined using generating 
functions.\footnote{The formula was also proven using more direct adhoc methods.} The parameterization of complete simple games with $t$ equivalence classes from 
\cite{carreras1996complete} and the reformulation of the conditions in terms of integer points in a polyhedron, see \cite[Lemma 1]{kurz2013dedekind}, were the essential 
steps for this approach. Since we have provided a parameterization in Section~\ref{sec_parameterization} or can use the formulation for $t=2$ and non-complete simple games 
in \cite{freixas2021enumeration}, going along the same lines is feasible. For technical reasons we will start to enumerate the pairs $(\overline{n},\mathcal{M})$ satisfying 
conditions (I')-(IV') first, before we apply these results to those pairs $(\overline{n},\mathcal{M})$ that satisfy the conditions (a')-(e').  
  
\begin{lemma}
  \label{lemma_counting_representation}
  Each simple game with $t=2$ equivalence classes of players and $r\ge 2$ minimal winning vectors given by $\overline{n}=\left(\overline{n}_1,\overline{n}_2\right)\in\mathbb{N}_{>0}^2$ and
  $$
    \mathcal{M}=\begin{pmatrix}m^1\\\vdots\\ m^r\end{pmatrix}
  $$
  satisfying the conditions (I')-(III') can be written as
  \begin{equation}
    \overline{n}=\begin{pmatrix}
      z_1+r-1+\sum\limits_{j=1}^{r} x_j &
      z_2+r-1+\sum\limits_{j=1}^{r} y_j
    \end{pmatrix}  
  \end{equation}
  and 
  \begin{equation}
    \mathcal{M}=\begin{pmatrix}
    r-1+\sum\limits_{j=1}^r x_j   & 0 +y_1 \\ 
    r-2+\sum\limits_{j=1}^{r-1} x_j   & 1 +y_1+y_2\\ 
    \vdots & \vdots \\ 
    r-i+\sum\limits_{j=1}^{r-i+1} x_j & i-1 +\sum\limits_{j=1}^{i} y_j \\     
    \vdots & \vdots \\
    1+x_1+x_2  & r-2+\sum\limits_{j=1}^{r-1} y_j \\
    0+x_1     & r-1+\sum\limits_{j=1}^{r} y_j
    \end{pmatrix}
  \end{equation}
  where $x_1,\dots,x_r,y_1,\dots,y_r,z_1,z_2$ are non-negative integers fulfilling
  \begin{equation}
    \label{eq_distr}
    \sum_{i=1}^r x_i\,+\,\sum_{i=1}^r y_i\,+\, z_1\,+\, z_2 \,=\, n+2-2r.
  \end{equation} 
\end{lemma}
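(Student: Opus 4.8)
The plan is to show that conditions (I')–(III') force a specific "staircase" structure on $\mathcal{M}$, and then read off the free parameters. First I would exploit (III') directly: since $m^1_1 > m^2_1 > \dots > m^r_1 \ge 0$ are strictly decreasing non-negative integers, the gaps $m^i_1 - m^{i+1}_1$ are each at least $1$; I write $m^i_1 - m^{i+1}_1 = 1 + x_{r-i+1}$ for a non-negative integer, and $m^r_1 = x_1 \ge 0$. Summing these telescoping differences from row $i$ down to row $r$ gives exactly $m^i_1 = (r-i) + \sum_{j=1}^{r-i+1} x_j$, which is the first column of the claimed $\mathcal{M}$. Symmetrically, since $m^1_2 < m^2_2 < \dots < m^r_2$ are strictly increasing non-negative integers, I set $m^1_2 = y_1 \ge 0$ and $m^{i+1}_2 - m^i_2 = 1 + \dots$; more precisely writing the increments so that $m^i_2 = (i-1) + \sum_{j=1}^{i} y_j$ with each $y_j \ge 0$ recovers the second column. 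This is just the standard bijection between strictly monotone integer sequences and compositions, done twice (once for each column), and it is reversible: any choice of non-negative $x_1,\dots,x_r,y_1,\dots,y_r$ produces an $\mathcal{M}$ satisfying (III').

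Next I would handle $\overline{n}$ and the constraint (II'). Condition (II') says $0 \le m^i_1 \le \overline{n}_1$ and $0 \le m^i_2 \le \overline{n}_2$ for all $i$; since the first column is maximised at $i=1$ and the second at $i=r$, this reduces to $m^1_1 \le \overline{n}_1$ and $m^r_2 \le \overline{n}_2$, i.e. $\overline{n}_1 \ge (r-1) + \sum_{j=1}^r x_j$ and $\overline{n}_2 \ge (r-1) + \sum_{j=1}^r y_j$. I then introduce the slacks $z_1 := \overline{n}_1 - (r-1) - \sum_{j=1}^r x_j \ge 0$ and $z_2 := \overline{n}_2 - (r-1) - \sum_{j=1}^r y_j \ge 0$, which gives precisely the claimed form of $\overline{n}$. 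Finally, adding the two defining equations for $z_1, z_2$ and using $\overline{n}_1 + \overline{n}_2 = n$ from (I') yields $\sum x_i + \sum y_i + z_1 + z_2 = n - 2(r-1) = n + 2 - 2r$, which is \eqref{eq_distr}.

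It remains to argue the correspondence is exact, i.e. that (I')–(III') hold for \emph{every} choice of non-negative integers satisfying \eqref{eq_distr}, and no others. Monotonicity (III') is automatic from the construction as noted. Condition (II') holds because $z_1, z_2 \ge 0$ are exactly the slacks in the two binding inequalities, and the remaining inequalities $m^i_1 \le \overline{n}_1$, $m^i_2 \le \overline{n}_2$ follow by monotonicity; non-negativity of all entries is immediate since $x_1, y_1 \ge 0$. Condition (I') is the single equation $\overline{n}_1 + \overline{n}_2 = n$, and $\overline{n}_1, \overline{n}_2 > 0$ is forced because each is at least $r - 1 \ge 1$ (here we use $r \ge 2$); the ordering $\overline{n}_1 \ge \overline{n}_2$ from (I') is \emph{not} imposed here, consistent with the lemma only claiming (I')–(III') — I should double-check that the statement indeed does not require $\overline{n}_1 \ge \overline{n}_2$ at this stage, and if it does, restrict the parameter range accordingly. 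The main obstacle, such as it is, is purely bookkeeping: getting the index shifts in the two telescoping sums to line up so that row $i$ really reads $\bigl(r-i+\sum_{j=1}^{r-i+1} x_j,\ i-1+\sum_{j=1}^{i} y_j\bigr)$ — it is easy to be off by one in the summation limits, so I would verify the formula explicitly for $i=1$, $i=r$, and a generic middle row before declaring victory. There is no deep step; the content is the observation that the pair of columns of $\mathcal{M}$ are independent strictly-monotone integer sequences, each of which decomposes canonically into a "mandatory" arithmetic part plus a non-negative composition.
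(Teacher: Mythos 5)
Your proof is correct and is essentially the paper's own argument: the paper simply records the same recursive formulas $x_1=m^r_1$, $x_h=m^{r-h+1}_1-m^{r-h+2}_1-1$, $y_1=m^1_2$, $y_i=m^i_2-m^{i-1}_2-1$, $z_1=\overline{n}_1-(r-1)-\sum_{j} x_j$, $z_2=\overline{n}_2-(r-1)-\sum_{j} y_j$ and notes that non-negativity must be verified, which is exactly your telescoping-plus-slack decomposition. Your side remark about the ordering $\overline{n}_1\ge\overline{n}_2$ is well taken: the paper's subsequent counting in fact treats (I') as only requiring $\overline{n}_1,\overline{n}_2>0$ and $\overline{n}_1+\overline{n}_2=n$ (the ordering is restored only later via condition (e') and the symmetrization argument), so the bijection with all non-negative solutions of \eqref{eq_distr} is as you describe.
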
  
\begin{proof}
  For one direction, we only have to check the conditions (I')-(III'). For the other direction, we state that one can recursively determine the $x_h$, $y_i$, and $z_j$ 
  via 
  \begin{eqnarray*}
    x_1 &=&m^r_1\\
    x_h &=& m^{r-h+1}_1-m^{r-h+2}_1-1\quad\text{for } h=2,\dots,r \\
    y_1 &=& m^1_2  \\ 
    y_i &=& m^i_2-m^{i-1}_2-1\quad\text{for }i=2,\dots,r\\ 
    z_1 &=& \overline{n}_1-(r-1)-\sum_{j=1}^r x_j,\text{ and}\\
    z_2 &=& \overline{n}_2-(r-1)-\sum_{j=1}^r y_j. 
  \end{eqnarray*}  
  Verifying $x_h,y_i,z_j\ge 0$ finishes the proof.
\end{proof}  

Directly from Equation~(\ref{eq_distr}) and the non-negativity of the $x$-, $y$-, and $z$-variables we conclude $2\le r\le\left\lfloor n/2\right\rfloor+1$ (and $n\ge 2$).  
The number of non-negative integer solutions of Equation~(\ref{eq_distr}) is given by 
$$
  {{(n+2-2r)+(2r+2)-1}\choose{(2r+2)-1}}={{n+3}\choose{2r+1}},
$$ 
so that the total number of cases is given by
\begin{equation}
  \label{eq_count_1}
  \sum_{r=2}^{\left\lfloor n/2\right\rfloor+1} {{n+3}\choose{2r+1}} =2^{n+2}-{{n+3}\choose 1}-{{n+3}\choose 3}.
\end{equation}
Next we consider the cases where condition (IV') is violated. These cases are characterized by
\begin{itemize}
  \item $x_2=\dots=x_r=0$;
  \item $y_2=\dots=y_r=0$;
  \item $y_1=0\,\vee\,z_1=0$; and
  \item $x_1=0\,\vee\,z_2=0$, i.e., there are 
\end{itemize} 
$$
4+\sum_{i=1}^{n+2-2r}4=4+4(n+2-2r)
$$
such cases for each $2\le r\le\left\lfloor n/2\right\rfloor+1$.\footnote{For $a=n+2-2r$ we have the four cases $\left(x_1,y_1,z_1,z_2\right)\in\big\{(a,0,0,0),(0,a,0,0),(0,0,a,0), 
(0,0,0,a)\big\}$ and for each $1\le i\le a-1$ we have the four cases $\left(x_1,y_1,z_1,z_2\right)\in\big\{(i,0,a-i,0),(i,0,0,a-i),(0,i,a-i,0), 
(0,i,0,a-i)\big\}$.} Additionally for the case $r=\lfloor\frac{n}{2}\rfloor+1$ and $n$ is even, it remains to add another term,\footnote{With n even, for  $a=n+2-2(\lfloor\frac{n}{2}\rfloor+1)=0$ we have the case $(x_1,y_1,z_1,z_2)=(0,0,0,0)$.} so that the total number of cases is given by:
    $$  \left\{
               \begin{array}{ll}
                 4(n-\left\lfloor\frac{n}{2}\right\rfloor-1)\left\lfloor\frac{n}{2}\right\rfloor & \; \hbox{if n is odd} \\ \\
                 4(n-\left\lfloor\frac{n}{2}\right\rfloor-1)\left\lfloor\frac{n}{2}\right\rfloor+1 & \; \hbox{if n is even} 
               \end{array}
             \right.
$$
which is exactly
\begin{equation}
     \label{eq_count_2}
     (n-1)^2
\end{equation}

for each $n\ge 2$.

The case $r=1$ is treated separately:
\begin{lemma}
  \label{lemma_case_r_1}
  For $t=2$, $r=1$, and each $n\ge 1$ the number of pairs $\left(\overline{n},\mathcal{M}\right)$ satisfying conditions (I')-(IV') is given by 
  \begin{equation}
    \label{eq_count_3}
    \frac{n^3 + 6n^2 - 13n + 6}{6}.
  \end{equation}
\end{lemma}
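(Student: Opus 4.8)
The plan is to exploit how drastically the conditions collapse when $r=1$. First I would note that (III') is vacuous for $r=1$, since it only constrains pairs of consecutive rows and there are none. Hence a pair $(\overline{n},\mathcal{M})$ counted in the lemma consists of a counting vector $\overline{n}=(\overline{n}_1,\overline{n}_2)$ with $\overline{n}_1,\overline{n}_2\ge 1$ and $\overline{n}_1+\overline{n}_2=n$ (by (I')), together with a single row $m^1=(m^1_1,m^1_2)$ with $0\le m^1_1\le\overline{n}_1$ and $0\le m^1_2\le\overline{n}_2$ (by (II')), subject to the one remaining constraint (IV').

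Next I would unpack (IV') in this situation. With $r=1$ the only index is $h=1$, so the only candidate vectors are $m':=m^1+(-1,1)$ and $m':=m^1+(1,-1)$. The one point needing care is that the clause ``$m^{h'}\not\succeq m'$ for all $1\le h'\le r$'' is here automatically satisfied: in either case $m'\ne m^1$ has one coordinate strictly larger than the corresponding coordinate of $m^1$, so $m^1\not\succeq m'$. Thus (IV') reduces to a pure box-membership statement: it holds if and only if ($m^1_1\ge 1$ and $m^1_2\le\overline{n}_2-1$) or ($m^1_1\le\overline{n}_1-1$ and $m^1_2\ge 1$).

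Then, for a fixed admissible $\overline{n}=(a,b)$, I would count the $m^1\in\{0,\dots,a\}\times\{0,\dots,b\}$ that \emph{violate} (IV'). Negating the disjunction, such an $m^1$ satisfies $\bigl(m^1_1=0\text{ or }m^1_2=b\bigr)$ and $\bigl(m^1_1=a\text{ or }m^1_2=0\bigr)$; multiplying out the four combinations and using $a,b\ge 1$ leaves exactly the two vectors $m^1=\mathbf{0}$ and $m^1=\overline{n}$ (the degenerate cases whose associated Boolean functions are not simple games with two equivalence classes). Hence each admissible $\overline{n}=(a,b)$ contributes exactly $(a+1)(b+1)-2$ valid rows $m^1$.

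Finally I would sum over $\overline{n}$. Writing $\overline{n}_1=a$ and $\overline{n}_2=n-a$ for $a=1,\dots,n-1$, the total is
\begin{equation*}
  \sum_{a=1}^{n-1}\bigl((a+1)(n-a+1)-2\bigr)=\sum_{a=1}^{n-1}(a+1)(n-a+1)-2(n-1).
\end{equation*}
The substitution $c=a+1$ together with the standard formulas for $\sum c$ and $\sum c^{2}$ gives $\sum_{a=1}^{n-1}(a+1)(n-a+1)=\frac{(n-1)(n+1)(n+6)}{6}$, so the total equals $(n-1)\bigl(\frac{(n+1)(n+6)}{6}-2\bigr)=\frac{(n-1)(n^{2}+7n-6)}{6}=\frac{n^{3}+6n^{2}-13n+6}{6}$, which also yields the correct value $0$ at $n=1$, where the sum is empty. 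The only genuine obstacle is the careful reduction of (IV') in the second step; everything after that is a routine finite computation.
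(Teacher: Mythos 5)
Your proposal is correct and follows essentially the same route as the paper: for $r=1$ the conditions reduce to $1\le\overline{n}_1\le n-1$, $m^1$ in the box $\{0,\dots,\overline{n}_1\}\times\{0,\dots,\overline{n}_2\}$, and (IV') excluding exactly the two vectors $\mathbf{0}$ and $\overline{n}$, after which one sums $(\overline{n}_1+1)(\overline{n}_2+1)-2$ over $\overline{n}_1$. Your careful unpacking of (IV') is in fact more explicit than the paper's, which simply asserts the equivalent list of conditions.
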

\begin{proof}
  We write $\overline{n}=\begin{pmatrix}n_1&n_2\end{pmatrix}$ and $\mathcal{M}=\begin{pmatrix}a&b\end{pmatrix}$. The conditions (I')-(IV') are satisfied 
  if 
  \begin{itemize}
    \item $1\le n_1\le n-1$, so that $1\le n_2\le n-1$ for $n_2=n-n_1$;
    \item $0\le a\le n_1$;
    \item $0\le b\le n_2=n-n_1$;
    \item $(a,b)\neq (n_1,n_2)$ and $(a,b)\neq (0,0)$.
  \end{itemize}
  Thus, there are
  $$
    \sum_{n_1=1}^{n-1} \left(\sum_{a=0}^{n_1}\sum_{b=0}^{n-n_1} 1\,-\,2\right)
    = \sum_{n_1=1}^{n-1} \big((n_1+1)\cdot(n-n_1+1)-2\big)=\frac{n^3 + 6n^2 - 13n + 6}{6}
  $$    
  cases.
\end{proof}
Adding (\ref{eq_count_3}) to the right hand side of (\ref{eq_count_1}) and subtracting the right hand side of (\ref{eq_count_2}) yields:
\begin{proposition}
  \label{prop_num_bipartite_simple_games}
  For each $n\ge 2$ the number of pairs $\left(\overline{n},\mathcal{M}\right)$ satisfying conditions (I')-(IV') is given by
  \begin{equation}
    \label{eq_count_4}
     2^{n+2}-n^2-3n-4.
  \end{equation}   
\end{proposition}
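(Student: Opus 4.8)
The plan is to split the pairs $(\overline{n},\mathcal{M})$ satisfying (I')--(IV') according to whether $r=1$ or $r\ge 2$, since these two regimes have already been set up separately above, and then to combine the three intermediate counts by inclusion--exclusion.

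First I would treat $r\ge 2$. By Lemma~\ref{lemma_counting_representation}, the pairs with $r\ge 2$ satisfying (I')--(III') are in bijection with the non-negative integer solutions of~(\ref{eq_distr}); for fixed $r$ there are $2r+2$ variables summing to $n+2-2r$, hence $\binom{n+3}{2r+1}$ solutions, and summing over $2\le r\le\lfloor n/2\rfloor+1$ gives, by~(\ref{eq_count_1}), a total of $2^{n+2}-\binom{n+3}{1}-\binom{n+3}{3}$. Of these, the pairs for which (IV') additionally \emph{fails} correspond under the same bijection to the four bullet conditions listed before~(\ref{eq_count_2}); counting these solution families (together with the single extra $(0,0,0,0)$-solution occurring when $n$ is even and $r=\lfloor n/2\rfloor+1$) and simplifying the parity split yields the closed form $(n-1)^2$ of~(\ref{eq_count_2}). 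Subtracting, the number of pairs with $r\ge 2$ satisfying all of (I')--(IV') equals $2^{n+2}-\binom{n+3}{1}-\binom{n+3}{3}-(n-1)^2$.

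Next I would add the $r=1$ contribution, which is $\tfrac{n^3+6n^2-13n+6}{6}$ by Lemma~\ref{lemma_case_r_1}; here nothing is subtracted because that lemma already builds in (IV'). Since the cases $r=1$ and $r\ge 2$ are disjoint and exhaust all admissible $r$, the desired number is
$$
  \left(2^{n+2}-\binom{n+3}{1}-\binom{n+3}{3}\right)-(n-1)^2+\frac{n^3+6n^2-13n+6}{6}.
$$
It remains only to simplify. Using $\binom{n+3}{1}=n+3$ and $\binom{n+3}{3}=\tfrac{(n+1)(n+2)(n+3)}{6}=\tfrac{n^3+6n^2+11n+6}{6}$, the two cubic fractions cancel down to $-\tfrac{24n}{6}=-4n$, and the remaining polynomial part $-(n+3)-4n-(n^2-2n+1)$ collapses to $-n^2-3n-4$, which gives~(\ref{eq_count_4}).

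The only real difficulty here is bookkeeping rather than mathematics: one must keep the signs straight (inclusion--exclusion removes the (IV')-violating $r\ge 2$ pairs, whereas the $r=1$ pairs are added outright), check that the even/odd case distinction in~(\ref{eq_count_2}) is genuinely captured by the single expression $(n-1)^2$, and verify the final polynomial identity. No new idea beyond Lemmas~\ref{lemma_counting_representation} and~\ref{lemma_case_r_1} together with the already-derived sums~(\ref{eq_count_1})--(\ref{eq_count_3}) is required, and the result holds for every $n\ge 2$ (a quick check at $n=2,3$ against both sides is a sensible sanity test).
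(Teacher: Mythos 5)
Your proposal is correct and follows exactly the paper's route: the paper obtains the proposition by adding the $r=1$ count~(\ref{eq_count_3}) to~(\ref{eq_count_1}) and subtracting~(\ref{eq_count_2}), which is precisely your decomposition, and your algebraic simplification to $2^{n+2}-n^2-3n-4$ checks out.
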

The enumeration formula in Proposition~\ref{prop_num_bipartite_simple_games} is only an auxiliary result and our actual aim is a corresponding enumeration formula 
for the number of pairs  $\left(\overline{n},\mathcal{M}\right)$ satisfying conditions (a')-(e'). To this end we have a look at condition (e') again and repeat our 
observation that for $t=2$ the unique permutation $\pi$ of $\{1,2\}$ that is not the identity interchanges $1$ and $2$. Given an arbitrary  pair 
$\left(\overline{n},\mathcal{M}\right)$ we can have
\begin{enumerate}
  \item[(i)] $\overline{n}_1\ge \overline{n_2}$, $\overline{n}_1^\pi\le \overline{n_2}^\pi$, and $\mathcal{M}>\mathcal{M}^\pi$; 
  \item[(ii)] $\overline{n}_1\le \overline{n_2}$, $\overline{n}_1^\pi\ge \overline{n_2}^\pi$, and $\mathcal{M}<\mathcal{M}^\pi$; and 
  \item[(iii)] $\overline{n}_1=\overline{n_2}$, $\overline{n}_1^\pi= \overline{n_2}^\pi$, $\overline{n}=\overline{n}^\pi$, and $\mathcal{M}=\mathcal{M}^\pi$.
\end{enumerate}   
Proposition~\ref{prop_num_bipartite_simple_games} counts the cases falling in categories (i)-(iii) while we actually only want to count the cases falling in category (i) or 
(iii). In order to be more precise, let us denote the corresponding counts by $c_{\text{i}}$, $c_{\text{ii}}$, and $c_{\text{iii}}$, respectively. Since 
$\left(\overline{n}^\pi\right)^\pi$ and $\left(\mathcal{M}^\pi\right)^\pi$ we have $c_{\text{i}}=c_{\text{ii}}$, so that 
\begin{equation}
  \label{eq_burnside}
  c_{\text{i}}+c_{\text{iii}}=\frac{2c_{\text{i}}+2c_{\text{iii}}}{2}=\frac{\Big(c_{\text{i}}+c_{\text{ii}}+c_{\text{iii}}\Big) \,\,+\,\, c_{\text{iii}}}{2},
\end{equation}
i.e., we need a counting formula for $c_{\text{iii}}$.\footnote{While our derivation of Equation~(\ref{eq_burnside}) is rather adhoc and elementary, we remark 
that for the general case $t\ge 2$ we can apply Burnside's lemma, which is sometimes also called Burnside's counting theorem, the Cauchy-Frobenius lemma, or 
orbit-counting theorem.\label{fn_burnside}}

\begin{lemma}
  \label{lemma_symmetric_bipartite_games}
  For each $n\ge 2$ we have $c_{\text{iii}}=0$ if $n$ is odd and 
  \begin{equation}
    \label{eq_count_5}
    c_{\text{iii}}=2^{m+1}-2m-2  
  \end{equation}
  if $n$ is even, where $m=n/2$.
\end{lemma}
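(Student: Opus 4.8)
The plan is to split according to the parity of $n$ and, for even $n$, to count the admissible pairs $(\overline{n},\mathcal{M})$ lying in category~(iii) separately for $r=1$ and for $r\ge 2$, using the parameterization of Lemma~\ref{lemma_counting_representation} in the latter case. First I would note that if $n$ is odd, then a pair in category~(iii) would require $\overline{n}_1=\overline{n}_2$ together with $\overline{n}_1+\overline{n}_2=n$ from~(I'), which is impossible; hence $c_{\text{iii}}=0$. So from here on I would set $n=2m$, which forces $\overline{n}=(m,m)$.

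For $r=1$ I would argue directly: here $\mathcal{M}=\begin{pmatrix}a&b\end{pmatrix}$ and $\mathcal{M}^\pi=\begin{pmatrix}b&a\end{pmatrix}$, so $\mathcal{M}=\mathcal{M}^\pi$ forces $a=b$, while (as observed in the proof of Lemma~\ref{lemma_case_r_1}) condition~(IV') amounts here to $(a,b)\notin\{(0,0),(m,m)\}$. Thus exactly the values $a\in\{1,\dots,m-1\}$ survive, contributing $m-1$ pairs (to be read as $0$ when $m=1$).

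For $r\ge 2$ I would work with the variables $x_1,\dots,x_r,y_1,\dots,y_r,z_1,z_2$ of Lemma~\ref{lemma_counting_representation}, which are in bijection with the pairs $(\overline{n},\mathcal{M})$ satisfying (I')--(III') for that value of $r$. Writing $\mathcal{M}^\pi=\begin{pmatrix}m^r_2&m^r_1\\ \vdots&\vdots\\ m^1_2&m^1_1\end{pmatrix}$ and substituting the explicit rows $m^i=\big(r-i+\sum_{j=1}^{r-i+1}x_j,\ i-1+\sum_{j=1}^{i}y_j\big)$, the identity $m^i_1=m^{r+1-i}_2$ becomes $\sum_{j=1}^{r+1-i}x_j=\sum_{j=1}^{r+1-i}y_j$; ranging $i$ over $1,\dots,r$ and taking successive differences, $\mathcal{M}=\mathcal{M}^\pi$ holds iff $x_j=y_j$ for all $j$ (the equations $m^i_2=m^{r+1-i}_1$ give the same condition). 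Under $x_j=y_j$ one has $\overline{n}_1=\overline{n}_2$ iff $z_1=z_2$, so, writing $z:=z_1=z_2$, Equation~(\ref{eq_distr}) turns into $\sum_{i=1}^r x_i+z=m+1-r$; this equation in $r+1$ non-negative unknowns has ${m+1\choose r}$ solutions and forces $r\le m+1$.

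Finally I would discard the solutions violating~(IV'). Substituting $x_j=y_j$ and $z_1=z_2=z$ into the characterization of when (IV') fails, given just before Lemma~\ref{lemma_case_r_1}, that characterization collapses to ``$x_2=\dots=x_r=0$ and ($x_1=0$ or $z=0$)''. Since then $x_1+z=m+1-r$, the offending solutions are $(x_1,z)\in\{(m+1-r,0),(0,m+1-r)\}$, namely two of them when $m+1-r\ge 1$ and only $(0,0)$ when $r=m+1$; summing over $2\le r\le m+1$ gives $2(m-1)+1=2m-1$ bad solutions. Hence the number of category~(iii) pairs with $r\ge 2$ equals $\sum_{r=2}^{m+1}{m+1\choose r}-(2m-1)=\big(2^{m+1}-m-2\big)-(2m-1)=2^{m+1}-3m-1$, and adding the $m-1$ pairs with $r=1$ yields $c_{\text{iii}}=2^{m+1}-2m-2$, as claimed. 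The only mildly delicate point I foresee is the bookkeeping that converts $\mathcal{M}=\mathcal{M}^\pi$ into the relations $x_j=y_j$ and that handles the boundary value $r=m+1$ correctly; everything else is a direct instantiation of results already established in the excerpt.
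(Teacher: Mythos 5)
Your proposal is correct and follows essentially the same route as the paper's proof: reduce the symmetry condition $\mathcal{M}=\mathcal{M}^\pi$ to $x_i=y_i$ and $z_1=z_2$ in the parameterization of Lemma~\ref{lemma_counting_representation}, count the ${m+1\choose r}$ solutions of $\sum_i x_i+z=m+1-r$ for $2\le r\le m+1$, subtract the $2m-1$ violations of (IV'), and add the $m-1$ cases with $r=1$. The extra bookkeeping you supply for translating $m^i_1=m^{r+1-i}_2$ into prefix-sum equalities is a welcome elaboration of a step the paper states without detail.
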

\begin{proof}
  We count the number of pairs $\left(\overline{n},\mathcal{M}\right)$ falling in category (iii). First we note that 
  $\overline{n}_1=\overline{n}_2$ implies that $n$ is even, so that we assume that $n$ is even in the following. We will go 
  along the same lines as in the derivation of the enumeration formula in Proposition~\ref{prop_num_bipartite_simple_games}. 
  Since $\mathcal{M}^\pi=\mathcal{M}$, where $\pi$ is the permutation swapping $1$ and $2$, we have 
  $\left(m^1_1,\dots,m^r_1\right)=\left(m^r_2,\dots,m^1_2\right)$. In the context of Lemma~\ref{lemma_counting_representation} 
  this is equivalent to $x_i=y_i$ for all $1\le i\le r$ and $z_1=z_2$. Equation~(\ref{eq_distr}) then simplifies to 
  $$
    2\sum_{i=1}^r x_i\,+\, 2z_1\,=\, n+2-2r,
  $$
  which is equivalent to 
  $$
    \sum_{i=1}^r x_i\,+\, z_1\,=\, m+1-r,  
  $$
  so that we have ${{m+1}\choose r}$ non-negative integer solutions for each $2\le r\le m+1$ and 
  \begin{equation}
    \label{eq_s_1}
    \sum_{r=2}^{m+1} {{m+1}\choose r} =2^{m+1} -{{m+1}\choose 1} -{{m+1}\choose 0}=2^{m+1}-m-2
  \end{equation} 
  solutions in total. The number of cases where condition (IV') is violated is given by $2$ for each $2\le r\le m$ and by $1$ for $r=m+1$, so that 
  the total number of cases is given by
  \begin{equation}
    \label{eq_s_2}
    1+\sum_{r=2}^m 2=2(m-1)+1=2m-1.
  \end{equation}
  For $r=1$ we proceed as in the proof of Lemma~\ref{lemma_case_r_1}. Here we have $n_1=n_2=m$ and $a=b$, so that the number of cases is given by
  \begin{equation}
    \label{eq_s_3}
    \sum_{a=1}^{m-1} 1=m-1.
  \end{equation} 
  Subtracting the right hand side of (\ref{eq_s_2}) from the right hand side of (\ref{eq_s_1}) and adding the right hand side of (\ref{eq_s_3}) yields 
  the stated formula.  
\end{proof}

\begin{theorem}
  For each $n\ge 2$ the number of simple games with $n$ players and two equivalence classes is given by 
  \begin{equation}
     \left\{
               \begin{array}{ll}
                 2^{n+1}-\frac{n^2+3n+4}{2} & \; \hbox{if n is odd} \\ \\
                 2^{n+1}+2^{\frac{n}{2}}-\frac{n^2+4n+6}{2} & \; \hbox{if n is even} 
               \end{array}
             \right.
  \end{equation}
\end{theorem}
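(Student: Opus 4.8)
The plan is to simply assemble the ingredients that are already in place and then to finish with an elementary case distinction on the parity of $n$; no new combinatorial argument is needed. Throughout, $n\ge 2$, since a simple game with two equivalence classes has at least two players (and Proposition~\ref{prop_num_bipartite_simple_games} and Lemma~\ref{lemma_symmetric_bipartite_games} both require $n\ge 2$). First I would invoke the Theorem specialised to $t=2$: up to isomorphism, a simple game with $n$ players and two equivalence classes of players corresponds to exactly one pair $(\overline{n},\mathcal{M})$ satisfying conditions (a')--(e'). Hence the number we want to compute is the number of such pairs. Now, the pairs satisfying (I')--(IV') are precisely the pairs consisting of a simple game together with a labelling of its two equivalence classes, and the unique non-trivial relabelling is the class-swap $\pi$, acting by $(\overline{n},\mathcal{M})\mapsto(\overline{n}^\pi,\mathcal{M}^\pi)$. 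Condition (e') singles out, from each pair and its $\pi$-image, the representative lying in category (i) or (iii); so in the notation fixed before~\eqref{eq_burnside}, the quantity to be determined equals $c_{\text{i}}+c_{\text{iii}}$.

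Next, every pair satisfying (I')--(IV') lies in exactly one of the categories (i), (ii), (iii), so Proposition~\ref{prop_num_bipartite_simple_games} gives $c_{\text{i}}+c_{\text{ii}}+c_{\text{iii}}=2^{n+2}-n^2-3n-4$. Since $\pi$ is an involution (applying it twice restores $\overline{n}$ and $\mathcal{M}$) that interchanges categories (i) and (ii) while preserving conditions (III')/(IV') and the value of $n$, it induces a bijection between the category-(i) pairs and the category-(ii) pairs, i.e.\ $c_{\text{i}}=c_{\text{ii}}$. Plugging this into~\eqref{eq_burnside} yields
$$
  c_{\text{i}}+c_{\text{iii}}=\frac{\big(2^{n+2}-n^2-3n-4\big)+c_{\text{iii}}}{2}.
$$
Finally I substitute $c_{\text{iii}}$ from Lemma~\ref{lemma_symmetric_bipartite_games}. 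If $n$ is odd then $c_{\text{iii}}=0$, and the right-hand side is $2^{n+1}-\tfrac{n^2+3n+4}{2}$. If $n$ is even then $c_{\text{iii}}=2^{m+1}-2m-2$ with $m=n/2$; inserting this and simplifying with $2m=n$ turns the right-hand side into $2^{n+1}+2^{n/2}-\tfrac{n^2+4n+6}{2}$. Noting that $n^2+3n+4$ (for $n$ odd) and $n^2+4n+6$ (for $n$ even) are even, these expressions are genuine integers, and the claimed formula follows.

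I do not expect a real obstacle here, since all the genuinely combinatorial work was carried out in Proposition~\ref{prop_num_bipartite_simple_games} and Lemma~\ref{lemma_symmetric_bipartite_games}. The one point deserving care is the bookkeeping in the first paragraph: making precise that passing from the pairs satisfying (I')--(IV') to the pairs satisfying (a')--(e') is exactly the operation of quotienting by the class-swap $\pi$ and choosing the canonical representative, so that the desired count is indeed $c_{\text{i}}+c_{\text{iii}}$ and~\eqref{eq_burnside} applies; after that only the parity-dependent algebraic simplification remains.
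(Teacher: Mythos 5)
Your proposal is correct and follows exactly the paper's own argument: reduce the count to pairs $(\overline{n},\mathcal{M})$ satisfying (a')--(e'), apply Equation~(\ref{eq_burnside}) with the totals from Proposition~\ref{prop_num_bipartite_simple_games} and Lemma~\ref{lemma_symmetric_bipartite_games}, and simplify by parity. The only difference is that you spell out the bookkeeping (the involution argument and the integrality check) slightly more explicitly than the paper does, which is harmless.
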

\begin{proof}
  As observed, the corresponding number equals the number of pairs $\left(\overline{n},\mathcal{M}\right)$ satisfying conditions (a')-(e'). So, 
  plugging in the formulas of Proposition~\ref{prop_num_bipartite_simple_games} and Lemma~\ref{lemma_symmetric_bipartite_games} into Equation~(\ref{eq_burnside}) 
  yields the stated result. 
\end{proof}
So, indeed the number of bipartite simple games with $n$ players is in $\theta(2^n)$. More precisely, 
the number number of bipartite simple games with $n$ players asymptotically equals $2^{n+1}$.

Of course we may also extract explicit formulas for the number of bipartite simple games with $n$ players and $r$ minimal winning vectors from our intermediate results. 
Finally, we remark that for each pair of fixed parameters $p$ and $r$ it is possible to describe the pairs $(\overline{n},\mathcal{M})$ that satisfy the conditions (I)-(IV) 
as integer points in a suitable polyhedron. As demonstrated in \cite[Section 3.2]{kurz2013dedekind} for complete simple games, we can then apply 
an algorithmic version of Ehrhart theory, see e.g.\ \cite{lepelley2008ehrhart}, where software packages like e.g.\ \texttt{Barvinok} are available, to explicitly compute a 
quasi-polynomial for their number. Being 
interested in the number of simple games with $n$ players, $t$ equivalence classes, and $r$ minimal winning vectors, in terms of $n$, we have to consider the conditions 
(a)-(e) instead of the conditions (I)-(IV). As mentioned in Footnote~\ref{fn_burnside}, we can apply Burnside's lemma to this end and reduce the problem to $t!$ subproblems 
that can be treated as described before. To sum up, the computation of explicit formulas for the number of simple games with $t$ equivalence and $r$ minimal winning vectors 
in terms of the number of players $n$ is algorithmically possible but rather messy. Since we do not expect any {\lq\lq}nice{\rq\rq} formulas we abstain from going into the 
details. Maybe there are more clever ways to at least determine the order of magnitude. However, as far as we know, even the maximum possible number $r$ of minimal winning 
vectors given $t>2$ equivalence classes of players is unknown. 


\end{document}